\documentclass[sigconf]{acmart}

\newcommand{\RR}{{\mathbb R}}

\newcommand{\xx}{{\mathbf x}}

\newcommand{\ff}{{\mathbf f}}

\newcommand{\pp}{{\mathbf p}}

\usepackage{algorithm}
\usepackage{algpseudocode} % provides algpseudocode (algorithmicx)
\usepackage{graphicx}
\usepackage{subfigure}
\usepackage{booktabs}
\usepackage{multirow}
\usepackage{hyperref}
\usepackage{color}
\usepackage{algorithm}
\usepackage{algorithmicx}
\usepackage{algpseudocode}
\usepackage{amsmath}
\usepackage{listings}
\usepackage{xcolor}
\AtBeginDocument{%
  }

\setcopyright{acmlicensed}
\copyrightyear{2026}
\acmYear{2026}
\acmDOI{XXXXXXX.XXXXXXX}
\renewcommand\footnotetextcopyrightpermission[1]{}
\acmISBN{978-1-4503-XXXX-X/2018/06}

\begin{document}

%%
%% The "title" command has an optional parameter,
%% allowing the author to define a "short title" to be used in page headers.
% \title{Formal Safety Verification for Nonlinear Systems with\\ Generative Barrier Certificate}
\title[Formal Safety Verification with
Generative Barrier Certificate]{Formal Safety Verification for Nonlinear Systems with Generative Barrier Certificate}

\author{Mengxin Ren\textsuperscript{1}, Hanrui Zhao\textsuperscript{2}}
\affiliation{%
 \institution{\textit{$^{1}$Shanghai Key Lab of Trustworthy Computing, East China Normal University, Shanghai, China}\\
\textit{$^{2}$College of Computer Science and Technology, National University of Defense Technology, Changsha, China}
}
\country{}
}

\renewcommand{\shortauthors}{Ren et al.}

%%
%% The abstract is a short summary of the work to be presented in the
%% article.
\begin{abstract}
Safety verification is a fundamental problem in control theory. Barrier certificates (BCs) provide a powerful formal mechanism, yet deriving BCs is computationally intensive.
This paper introduces a generative framework that leverages large language models (LLMs) to synthesize BCs through reasoning. 
Based on the classical Sum-of-Squares (SOS) approach, we train a domain-specific LLM capable of generating high-quality BC candidates for nonlinear systems. Then, the LLM-generated BCs transform the intractable Bilinear Matrix Inequality (BMI) solving problems into convex Linear Matrix Inequality (LMI) feasibility test, significantly improving efficiency while preserving correctness. 
Experimental results show that our generative method achieves several orders of magnitude speedup over traditional numerical BC approaches and, perhaps surprisingly, surpasses the state-of-the-art dedicated neural BC model. These findings mark a substantive step toward integrating generative AI with formal safety verification for dynamical systems.
\end{abstract}

%%
%% The code below is generated by the tool at http://dl.acm.org/ccs.cfm.
%% Please copy and paste the code instead of the example below.
%%
% \begin{CCSXML}
% <ccs2012>
%  <concept>
%   <concept_id>00000000.0000000.0000000</concept_id>
%   <concept_desc>Do Not Use This Code, Generate the Correct Terms for Your Paper</concept_desc>
%   <concept_significance>500</concept_significance>
%  </concept>
%  <concept>
%   <concept_id>00000000.00000000.00000000</concept_id>
%   <concept_desc>Do Not Use This Code, Generate the Correct Terms for Your Paper</concept_desc>
%   <concept_significance>300</concept_significance>
%  </concept>
%  <concept>
%   <concept_id>00000000.00000000.00000000</concept_id>
%   <concept_desc>Do Not Use This Code, Generate the Correct Terms for Your Paper</concept_desc>
%   <concept_significance>100</concept_significance>
%  </concept>
%  <concept>
%   <concept_id>00000000.00000000.00000000</concept_id>
%   <concept_desc>Do Not Use This Code, Generate the Correct Terms for Your Paper</concept_desc>
%   <concept_significance>100</concept_significance>
%  </concept>
% </ccs2012>
% \end{CCSXML}

% \ccsdesc[500]{Do Not Use This Code~Generate the Correct Terms for Your Paper}
% \ccsdesc[300]{Do Not Use This Code~Generate the Correct Terms for Your Paper}
% \ccsdesc{Do Not Use This Code~Generate the Correct Terms for Your Paper}
% \ccsdesc[100]{Do Not Use This Code~Generate the Correct Terms for Your Paper}

%%
%% Keywords. The author(s) should pick words that accurately describe
%% the work being presented. Separate the keywords with commas.
\keywords{Formal Verification, Barrier Certificate, Large Language Model,
Generative Artificial Intelligence}
%% A "teaser" image appears between the author and affiliation
%% information and the body of the document, and typically spans the
%% page.
% \begin{teaserfigure}
%   \includegraphics[width=\textwidth]{sampleteaser}
%   \caption{Seattle Mariners at Spring Training, 2010.}
%   \Description{Enjoying the baseball game from the third-base
%   seats. Ichiro Suzuki preparing to bat.}
%   \label{fig:teaser}
% \end{teaserfigure}

% \received{20 February 2007}
% \received[revised]{12 March 2009}
% \received[accepted]{5 June 2009}

%%
%% This command processes the author and affiliation and title
%% information and builds the first part of the formatted document.
\maketitle

\section{Introduction}
Safety assurance in safety-critical domains is essential for various practical applications including autonomous vehicles and robotics, as erroneous decisions may result in catastrophic failures~\cite{Mirzarazi,perez2024artificial,guiochet2017safety}. Conventional verification paradigms typically rely on simulation-based testing~\cite{li2023simulation}, which is useful but cannot exhaustively explore all system behaviors. Formal verification~\cite{hasan2015formal,berard2013systems,bresolin2015formal} addresses this limitation by providing mathematically rigorous guarantees that system trajectories satisfy safety constraints under all admissible conditions. The fundamental goal is to certify that, from any state within a prescribed initial set, the system will never reach designated unsafe regions.

Barrier certificate (BC)~\cite{prajna2004safety} is a promising formal method for safety verification. A feasible BC separates all system trajectories from the unsafe region, yielding an over-approximation of the reachable set and certifying safety over an infinite horizon. BCs synthesis requires global sign-definiteness which aligns closely with quantifier-elimination paradigms~\cite{sturm2011verification}. Among existing approaches, sum-of-squares (SOS) relaxation~\cite{prajna2007framework} has attracted significant attention due to its capability to mitigate computational complexity~\cite{prajna2007framework,kong2013exponential}. The core idea is to construct SOS programs that transform quantified nonlinear constraints into nonconvex bilinear matrix inequalities (BMIs), which are subsequently handled by semidefinite programming solvers such as PENBMI~\cite{kocvara2005penbmi}. 
% Prajna et al. pioneered a SOS-based BCs synthesis framework for semialgebraic hybrid systems~\cite{prajna2007framework}, and Kong et al. further advanced this direction by introducing exponential-condition-based SDP formulations~\cite{kong2013exponential}.  
However, BMI feasibility is known to be NP-hard, which fundamentally limits scalability. 
To improve tractability, convex surrogate methods fix multipliers to convert BMIs into linear matrix inequalities (LMIs)~\cite{boyd1994linear}, enabling efficient convex optimization as implemented in SOSTOOLS~\cite{papachristodoulou2013sostools}, albeit at the cost of conservativeness that may eliminate feasible solutions to the original BMI problem. Thus, designing BCs parameterizations that balance expressiveness and computational efficiency remains an challenging problem.

With the rapid progress of machine learning, learning-based BCs synthesis~\cite{zhao2020synthesizing} has emerged as a powerful complement to traditional SOS methods. Existing studies typically employ feedforward neural networks to represent barrier functions and utilize modern training techniques to significantly improve scalability and applicability~\cite{zhao2023formal,abate2021fossil}. However, such approaches often require task-specific network architectures trained from scratch and rely on predefined functional templates (e.g., fixed-degree polynomials), which inherently limit model expressiveness and constrain the searchable solution space, thereby hindering the discovery of more optimal or non-prespecified  forms. Moreover, these learning-based methods still depend on explicit structural parameterization, making it difficult to achieve truly open-ended function discovery.

Against this backdrop, the advent of large language models (LLMs)~\cite{zhao2023survey,naveed2025comprehensive,chang2024survey,liu2024llms} introduces a paradigm shift. Unlike task-specialized neural architectures, LLMs serve as general-purpose knowledge and reasoning engines pretrained on massive corpora, endowed with nontrivial mathematical, logical, and symbolic reasoning capabilities. This enables the possibility of breaking away from fixed functional templates and rigid training architectures, and moving toward generative, adaptive, and more expressive BC synthesis.
Building on these insights, in this paper, we introduce a novel paradigm that leverages end-to-end fine-tuning of LLMs to generate BCs. Specifically, we reformulate the BC synthesis problem as a sequence-to-sequence generation task, where the input is a structured representation of the dynamical system and the output is the explicit symbolic form of a valid BC. By treating synthesis as a generative modeling problem, this approach unifies the traditionally complex optimization and constraint-solving processes into a single, learnable framework, enabling the discovery of BCs beyond fixed functional templates and alleviating the scalability limitations of conventional methods.

The main contributions of this paper can be summarized as follows:
\begin{itemize}
    % \item We propose and implement a novel framework that solves the barrier certificate synthesis problem end-to-end by fine-tuning a large language model.
    % % \item Our method represents a paradigm shift from parameter optimization to symbolic creation, which breaks free from fixed functional constraints to endow the model with a more powerful problem-solving capability for discovering diverse solutions.
    % \item Our method shifts from parameter optimization to symbolic generation, overcoming fixed functional constraints and enabling the discovery of diverse solutions.
    \item We develop an end-to-end framework that fine-tunes a LLM to synthesize barrier certificates through symbolic generation rather than parameter optimization, thereby overcoming template constraints and enabling the discovery of diverse and expressive ones.
    % \item We demonstrate how a pre-trained general knowledge engine can be adapted to solve specific scientific computing problems, rather than building specialized tools from scratch, which significantly enhances learning efficiency and the model's generalization potential.
    % \item A pre-trained general knowledge engine is adapted to specific scientific computing tasks enhances learning efficiency and cross-system generalization.

    \item We transform a pre-trained general-purpose language model into a domain-specialized scientific reasoning engine using an inverse-design corpus with diffeomorphic data augmentation, substantially improving learning efficiency and cross-system generalization.

    % \item Experimental results on a series of benchmarks show that our fine-tuned model can efficiently and accurately generate valid BCs for complex non-linear systems, surpassing existing methods in terms of success rate and applicability, and showcasing its potential as a unified and powerful framework for scientific discovery.
    % \item Experimental results demonstrate that our fine-tuned LLM for generative BC synthesis outperforming existing numerical and learning-based methods in both success rate and efficiency.
    \item Experimental results demonstrate that our fine-tuned LLM for generative BC synthesis outperforming existing numerical and neural approaches in both success rate and efficiency.
\end{itemize}

%%%%改下面的！！！！！%%%%%
% The organization of this paper is as follows. Preliminaries are described in Section 2. We discuss each part of the proposed framework in Section 3. In Section 4, we evaluate our algorithms through experiments. Section 5 reviews related work and we conclude the paper in Section 6.

% The organization of this paper is as follows. Preliminaries are described in Section 2. We present our novel framework in Section 3, detailing the end-to-end process from data synthesis to model fine-tuning and verification. In Section 4, we present our experimental results, including a detailed case study and a quantitative performance comparison. Finally, we conclude the paper and discuss future work in Section 5.
\section{Preliminaries}\label{sec:Pre}
% \textbf{Notations.}
%  Let $\RR$ and $\NN$ be the field of real numbers
%  and natural numbers, respectively.
%  $\RR[\xx]$ denotes the ring of polynomials with coefficients in $\RR$
%  over variables $\xx=[x_1,x_2,\ldots,x_n]^T$,
%  and $\RR[\xx]^n$ denotes the
%  $n$-dimensional polynomial ring vector.
% Let $R[\xx]_{d} \subset \RR[\xx]$ be the vector space of polynomials
%  of degree
%  at most $d$. %Let $\NN_{d}^{n}:=\{\alpha \in \NN^{n}: \sum_{i} \alpha_i \leq d\}$.
% Denote by $\Sigma[\xx]\subset \RR[\xx]$
%  (resp. $\Sigma[\xx]_d \subset \RR[\xx]_{2d}$)
%  the space of sums of squares (SOS) polynomials.
\textbf{Notations.}
Let $\mathbb{R}$ and $\mathbb{N}$ denote the sets of real and natural numbers, respectively.
The symbol $\mathbb{R}[\mathbf{x}]$ represents the polynomial ring in variables $\mathbf{x} = [x_1, x_2, \ldots, x_n]^{T}$ with coefficients in $\mathbb{R}$, while $\mathbb{R}[\mathbf{x}]^n$ denotes the $n$-dimensional vector space consisting of such polynomial entries.
$R[\xx]_{d} \subset \RR[\xx]$ denotes the subspace of all polynomials whose total degree does not exceed $d$.
The set of sum-of-squares (SOS) polynomials is denoted by $\Sigma[\xx]\subset \RR[\xx]$, and $\Sigma[\xx]_d \subset \RR[\xx]_{2d}$ represents the subset of SOS polynomials with degrees up to $2d$.
\subsection{Problem Statements}

%This section introduces the definition of a continuous system and the safety verification conditions, and the description of relevant symbols is given below first.

%Let $\RR$ and $\NN$ be the field of a real number and natural number,
%respectively. The symbol $\RR[\xx]:=\RR[x_1,\ldots,x_n]$ denotes the polynomial ring with coefficients in $\RR$ over variable $\xx=[x_1,x_2,\ldots,x_n]^{T}$, and $\RR[\xx]^m$ denotes
%the $m$-dimensional polynomial ring vector.
%For a polynomial $p(\xx) \in \RR[\xx]$,
%we use $\deg(p)$ to denote its total degree. %%%
%For $d\in \NN$, $\RR[\xx]_{d}$ denotes the set consisting of all real polynomials in $\xx$ with degree at most $d$. And
%for a polynomial ring vector $p=[p_1,\ldots,p_m]^{T} \in \RR[\xx]^m$,
%we use $\deg(p)$ to denote the highest degree among $p_{i}$, i.e., $\deg(p)=\max\{\deg(p_1),\ldots,\deg(p_m)\}$.
%Denote by $\Sigma[\xx]\subset \RR[\xx]$
%$\Sigma[\xx]_d$ denotes the $d$-th truncation of $\Sigma[\xx]$, i.e.,
%$ \Sigma[\xx]_d:= \Sigma[\xx] \cap \RR[\xx]_{d}$.

Considering a dynamical system by a finite number of first-order ordinary differential equations:
%For our purpose of safety controller synthesis, consider using differential equations to define the continuous dynamical system of the form
\begin{eqnarray}\label{sys}
\dot{\xx}=\ff(\xx),
\end{eqnarray}
% where $\dot{\xx}$ denotes the derivative of $\xx=[x_1,\ldots,x_n]^T\in\RR^n$ with respect to
% the time variable $t$, and  $\ff=[f_1,\ldots,f_n]^T$ is the vector field %%% 
% on the state space $\Psi\subset\RR[\xx]^n$. Assume that $\ff$ satisfies the local Lipschitz
% condition, which ensures that given the initial state point $\xx_0\in \Psi$,
% there exists a time $T>0$ and a unique function $\tau: [0,T) \mapsto \RR^{n}$ such that $\tau(0)=\xx_0$. And $\xx(t)$ is called a trajectory of (\ref{sys}) from $\xx_0$.
where $\dot{\xx}$ denotes the derivative of the state vector $\xx=[x_1,\ldots,x_n]^T\in\RR^n$  with respect to
the time variable $t$, and $\ff=[f_1,\ldots,f_n]^T$ denotes the vector field defined on the state space $\Psi\subset\RR[\xx]^n$.
We assume that $\ff$ is locally Lipschitz continuous, ensuring the existence of a time horizon $T > 0$ and a unique trajectory $\boldsymbol{\tau}: [0, T) \to \mathbb{R}^n$ satisfying $\boldsymbol{\tau}(0) = \mathbf{x}_0$ for any initial condition $\mathbf{x}_0 \in \Psi$. The trajectory $\mathbf{x}(t)$ of system (\ref{sys}) starting from $\mathbf{x}_0$ is thus well-defined.

For formal exposition, we define the dynamical system over $\xx$ as a tuple $\mathcal{F}:  \langle \ff, \Theta, \Psi \rangle$, wherein $\Theta$ is a set of initial states, $\ff$ is a vector field over the domain $\Psi \subset \RR^{n}$. $\Theta$ and $\Psi$ are expressed as semi-algebraic sets with polynomial equalities and inequalities over the system variables. For clarity, we denote $\Theta, \Psi$ as:
\begin{equation*}
\begin{array}{l}
\Psi:=\{\xx \in \RR^{n}\,|\,\psi_{1}(\xx)\geq 0,\ldots,\psi_{\kappa}(\xx)\geq 0\},\\
\Theta:=\{\xx \in \RR^{n}\,|\,\theta_1(\xx)\geq 0,\ldots, \theta_{\iota}(\xx)\geq 0\},
\end{array}
\end{equation*}
where $\psi_{j} \in \RR[\xx], 1 \leq j \leq \kappa$ and  $\theta_{i} \in \RR[\xx], 1\leq i \leq \iota$. Furthermore, these semi-algebraic sets are assumed to be compact. Suppose that the system has an unsafe region $\Xi$ which is a compact semi-algebraic set:
%Let $\Xi$ be a pre-specified unsafe region, defined by a semi-algebraic set:
\begin{equation*} \label{eq:X_u}
 \Xi:=\{\xx \in \RR^{n} \,| \, \xi_{1}(\xx)\geq 0,\ldots, \xi_{r}(\xx)\geq 0\},
\end{equation*}
where $\xi_{i}\in \RR[\xx], 1\leq i \leq r$. The safety of dynamical system $\mathcal{F}$ can be defined with respect to the unsafe region $\Xi$.

\begin{definition}[Safety]
For a dynamical system $\mathcal{F}:  \langle \ff, \Theta, \Psi \rangle$ with a predefined unsafe region represented by the state set $\Xi$, we say that the system $\mathcal{F}$ is {\it safe} if no trajectory originating from the initial set $\Theta$ enters the unsafe region $\Xi$. Formally, for any initial state $\mathbf{x}_0 \in \Theta$, there exists no time $t_1 > 0$ such that
$$ \forall t \in [0, t_1]. \mathbf x(t, \mathbf x_0) \in \Psi\ \,\,\, \mathrm{and} \,\, \mathbf x(t_1, \mathbf x_0)\in \Xi. $$
\end{definition}

Barrier certificate (BC) provides a sufficient condition for the safety property defined above, which is a real-valued function that separate the reachable set from the unsafe region, assigning non-negative values to all reachable states and negative values within the unsafe set. Here, we adopt a relaxed verification condition by introducing auxiliary polynomials for BCs synthesis adapted from~\cite{prajna2004safety}, preserves convexity while expanding the feasible space, facilitating more effective reasoning-based search.
\begin{theorem}\label{th:BC} %[Barrier Certificate]\label{prajna2007safety}
Let $\mathcal{F}: \langle \ff, \Theta, \Psi \rangle $ be a continuous system and an unsafe state set $\Xi$. If there exists a real-valued function $B: \Psi\rightarrow \RR$ and a polynomial $\lambda(\xx)$, 
%which satisfies the following conditions: 
%a barrier certificate of $\mathcal{C}$ for safety with respect to the unsafe region $\Xi$ is a real-valued function $B: \Psi\rightarrow\RR$,
such that the following conditions hold:
\begin{description}
\item[(i)] $B(\xx)\leq 0\quad\forall  \xx\in\Theta$,
\item[(ii)] $\mathcal{L}_f B(\xx) -\lambda(\xx) B(\xx)<0 \; \forall \xx \in \Psi$, where $\mathcal{L}_f B(\xx)$ denotes the {Lie-derivative} of $B(\xx)$ along the vector field $\ff(\xx)$, i.e., $\mathcal{L}_f B(\xx)=\sum_{i=1}^n\frac{\partial B}{\partial x_i} \cdot f_{i}(\xx)$,
\item[(iii)] $B(\xx)> 0\quad\forall \xx\in \Xi$,
%\item[(iii)] $B(\xx)=0\Rightarrow\mathcal{L}_f B(\xx)> 0\quad\forall \xx\in \Psi$,
\end{description}
%where $\mathcal{L}_f B(\xx)$ denotes the {Lie-derivative} of %$B(\xx)$ along the vector field $\ff(\xx)$, i.e., $\mathcal{L}_f %B(\xx)=\sum_{i=1}^n\frac{\partial B}{\partial x_i} \cdot f_{i}%%(\xx)$. In this paper, the third condition is relaxed to %$\mathcal{L}_f B(\xx)-\lambda B(\xx)<0\quad\forall \xx\in \Psi$, %which is convex.
then $B(\xx)$ is a BC of the system $\mathcal{F}$, and the safety of $\mathcal{F}$ is guaranteed.
\end{theorem}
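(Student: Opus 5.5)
We argue by contradiction, using a comparison argument along a trajectory. Suppose $\mathcal{F}$ is not safe. Then there exist $\xx_0\in\Theta$ and $t_1>0$ such that $\xx(t):=\xx(t,\xx_0)\in\Psi$ for all $t\in[0,t_1]$ and $\xx(t_1)\in\Xi$. We take $B$ to be continuously differentiable, so that the Lie derivative in (ii) makes sense; in the SOS setting $B$ is a polynomial, so this holds. Define the scalar function $b(t):=B(\xx(t))$ on $[0,t_1]$. It is $C^1$, and $\dot b(t)=\mathcal{L}_f B(\xx(t))$. Conditions (i) and (iii) give $b(0)\le 0$ and $b(t_1)>0$. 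The goal is to show that (ii) forbids $b$ from crossing from a nonpositive value to a positive one while the trajectory stays in $\Psi$.

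\textbf{Step 1 (integrating factor).} Set $\mu(t):=\lambda(\xx(t))$, which is continuous on $[0,t_1]$, and define
\[
g(t):=b(t)\exp\Bigl(-\int_0^t\mu(s)\,ds\Bigr).
\]
Then
\[
\dot g(t)=\bigl(\dot b(t)-\mu(t)b(t)\bigr)\exp\Bigl(-\int_0^t\mu(s)\,ds\Bigr).
\]
Since $\xx(t)\in\Psi$ for every $t\in[0,t_1]$, condition (ii) gives $\mathcal{L}_f B(\xx(t))-\lambda(\xx(t))B(\xx(t))<0$. The bracket is therefore negative, so $\dot g(t)<0$ on $[0,t_1]$.

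\textbf{Step 2 (contradiction).} Because $g$ is strictly decreasing, $g(t_1)<g(0)=b(0)\le 0$. The exponential factor is positive, so $b(t_1)<0$. This contradicts $b(t_1)=B(\xx(t_1))>0$, which holds by (iii) since $\xx(t_1)\in\Xi$. Hence no trajectory starting in $\Theta$ reaches $\Xi$ while remaining in $\Psi$, and $\mathcal{F}$ is safe.

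\textbf{Alternative without an integrating factor.} One can instead take $t^*:=\inf\{t\in[0,t_1]: b(t)>0\}$. By continuity $b(t^*)=0$, and then (ii) at $\xx(t^*)$ gives $\dot b(t^*)<0$. This prevents $b$ from becoming positive immediately after $t^*$, which contradicts the definition of $t^*$. Both routes rest only on conditions (i)–(iii).

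The main obstacle is a technical one: making sure the trajectory is defined on all of $[0,t_1]$ and that $B\circ\xx$ is differentiable there. Both follow from the definition of safety, which restricts attention to trajectories remaining in $\Psi$ up to $t_1$, together with local Lipschitzness of $\ff$ and smoothness of $B$. No compactness argument is needed, because we integrate only over the finite interval $[0,t_1]$.
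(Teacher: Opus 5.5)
Your proof is correct, and your main route differs from the paper's. The paper's proof is a one-line reduction. Specializing (ii) to points with $B(\xx)=0$ gives $\mathcal{L}_f B(\xx)<0$ on the zero level set. Together with (i) and (iii), $B$ then meets the classical zero-level-set barrier conditions, and safety is inherited from that standard result without the trajectory argument being written out. Your ``alternative'' route is exactly the argument behind that appeal: you take the first time $t^*$ at which $b$ turns positive, where $b(t^*)=0$ forces $\dot b(t^*)<0$. So that part is the paper's proof made explicit.

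Your integrating-factor argument is genuinely different. It uses (ii) along the whole trajectory rather than only on $\{B=0\}$. In return it is self-contained and avoids the infimum/first-crossing bookkeeping. It also gives a quantitative comparison bound, $B(\xx(t))<B(\xx_0)\exp\bigl(\int_0^t\lambda(\xx(s))\,ds\bigr)\le 0$ for all $t\in(0,t_1]$, so the trajectory stays in the strict sublevel set $\{B<0\}$ for as long as it remains in $\Psi$. The paper's reduction, in turn, shows that only the behaviour of $B$ on $\{B=0\}\cap\Psi$ matters. That is what ties the $\lambda$-relaxed condition (ii) back to the classical barrier-certificate notion the paper invokes.
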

%\begin{remark}
% Observing Condition~(ii), it is easy to see that $\mathcal{L}_f B(\xx)<0$ if $B(\xx)=0$, which, combining  with Condition (i) and (ii), indicates that $B(\xx)$ is a barrier certificate of the system $\mathcal{F}$ with respect to the given unsafe state set.
% \begin{proof}
{\em Proof.}
From Condition~(ii), it follows that $\mathcal{L}_f B(\mathbf{x}) < 0$ whenever $B(\mathbf{x}) = 0$. Together with Condition~(i) and (iii), this implies that $B(\mathbf{x})$ satisfies the requirements of a barrier certificate for the system $\mathcal{F}$ relative to the specified unsafe set. $ \hfill \square$
\subsection{Sum-of-Squares Relaxation}

We briefly review the standard machinery of Sum-of-Squares (SOS) relaxation~\cite{prajna2007framework}, which plays a central role in certifying polynomial nonnegativity. A polynomial 
$r(\xx)\in\mathbb{R}[\xx]$ is called an SOS polynomial if it admits a decomposition of the form
\begin{equation}\label{eq:sos_def}
    r(\xx) = \sum_{i=1}^{s} h_i^2(\xx),
\end{equation}
for some polynomials $h_i(\xx)\in\mathbb{R}[\xx]$.  
Equivalently,~\eqref{eq:sos_def} holds if and only if $r(\xx)$ can be represented via a Gram matrix:
\begin{equation}\label{eq:gram_form}
    r(\xx) = \mathbf{m}(\xx)^{\!\top} \, W \, \mathbf{m}(\xx),
\end{equation}
where $W$ is a symmetric positive semidefinite matrix ($W\succeq 0$), and $\mathbf{m}(\xx)$ is the vector collecting all monomials in $\xx$ up to degree 
$\frac{1}{2}\deg(r)$.
Thus searching for an SOS decomposition amounts to finding a feasible Gram matrix $W$ that satisfies~\eqref{eq:gram_form}. This leads to the semidefinite program:
\begin{equation*}\label{eq:sdp_relax}
\begin{aligned}
    &\text{find} && W  \\
    &\text{s.t.} && r(\xx) = \mathbf{m}(\xx)^{\!\top} W\, \mathbf{m}(\xx),\\
    &&& W \succeq 0,\quad W = W^{\!\top},
\end{aligned}
\end{equation*}
where one typically minimizes $\mathrm{trace}(W)$ as a dummy objective in the absence of a true optimization criterion.  
This SDP formulation can be efficiently solved by off-the-shelf SOS toolboxes such as {\it SOSTOOLS}~\cite{papachristodoulou2013sostools} and \textit{TSSOS}~\cite{wang2021tssos}.

\medskip
SOS relaxation is particularly useful for encoding nonnegativity of a polynomial over a basic semi-algebraic set.  
Consider the implication
\begin{equation*}\label{eq:implication}
    \bigwedge_{i=1}^{k} \, p_i(\xx) \ge 0  \Longrightarrow  q(\xx) \ge 0,
\end{equation*}
where $p_i(\xx), q(\xx) \in \mathbb{R}[\xx]$. The above machinery forms the basis for transforming the polynomial barrier certificate conditions into tractable SOS programs in our method.

% Putinar’s Positivstellensatz~\cite{put93} ensures that if the quadratic module generated by $\{p_1,\dots,p_k\}$ is archimedean and 
% $q(\xx) > 0$ on the set $\{\xx : p_i(\xx)\ge 0,\,1\le i\le k\}$,  
% then $q$ admits the certificate
% \begin{equation}\label{eq:sos_certificate}
%     q(\xx) = \sigma_0(\xx) + \sum_{i=1}^{k} \sigma_i(\xx)\, p_i(\xx),
% \end{equation}
% where each $\sigma_i(\xx)$ is an SOS polynomial.  
% Hence, the existence of an SOS representation of the form~\eqref{eq:sos_certificate} provides a sufficient condition for establishing~\eqref{eq:implication}.  
% In practice, one restricts the degrees of the multipliers $\sigma_i$ to lie within a prescribed bound, yielding a hierarchy of SDPs that approximate the positivity condition.

% The above machinery forms the basis for transforming the polynomial barrier certificate conditions into tractable SOS programs in our method.

\section{The framework of Generative BC synthesis}\label{framework}
In this section, we introduce a novel framework that leverages an LLM as the central component for the end-to-end synthesis of BCs.
Firstly, we design structured formal specifications that encode system dynamics and safety requirements, providing LLMs with an interpretable interface to understand the principles of formal safety verification. 
Secondly, to overcome the key challenge posed by the lack of high-quality training set from real dynamical systems, we design a reverse-engineering data generation procedure that starts from valid BCs and algorithmically derives compatible dynamical systems, thereby enabling large-scale synthesis of supervised samples and effective data augmentation.
Finally, to verify the correctness of LLM-generated candidate certificates, we employ SOS relaxation together with efficient LMIs feasibility checking. 
\begin{figure}[htbp]
  \centering
  \includegraphics[width=0.47\textwidth]{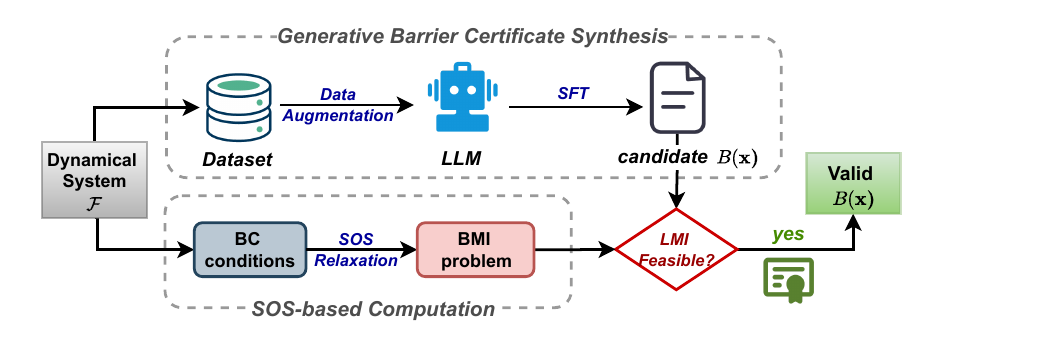}
  \caption{The pipeline of generative BC synthesis.}
  \label{fig:framework}
  % \Description{Logo of AAMAS 2026 -- The 25th International Conference on Autonomous Agents and Multiagent Systems.}
  \label{fig:framework_comparison}
\end{figure}
% This transforms the otherwise intractable verification task into an efficiently solvable, convex \textit{Linear Matrix Inequality (LMI)} feasibility problem. This verification pipeline demonstrates the potential of our framework as a powerful alternative to classical NP-hard Bilinear Matrix Inequality (BMI)-based methods.

As illustrated in Fig.~\ref{fig:framework_comparison}, our proposed generative BC synthesis method transforms the challenging NP-hard BMI-solving problem, which arises in traditional numerical approaches using SOS relaxation to compute BCs, into a more efficiently solvable convex LMI feasibility test problem. This not only enables the rapid generation of BCs with rich expressive power but also enhances the efficiency and scalability of the verification process. 

\subsection{Formulating Formal Specifications}
In this section, we first formulate structured formal specifications that serialize dynamical systems and safety requirements into a uniform symbolic format suitable for LLMs. This design transforms BC synthesis from a template-constrained search problem into an end-to-end sequence generation task, enabling LLMs to operate directly on symbolic representations.

Let us consider a dynamical system $\mathcal{F}:  \langle \ff, \Theta, \Psi \rangle$ whose vector fields $\ff(\xx)$ are polynomials, and the initial set is described as
$
\Theta = \{\xx \mid \theta_i(\xx) \ge 0\},
$ where $\theta$’s are vectors of polynomials and the inequalities are satisfied entry-wise. Thus, when $\Theta$ is a $n$-dimensional hypercube $[x_1^{\min}, x_1^{\max}] \times \dots \times [x_n^{\min}, x_n^{\max}]$, we may define
\[
{\theta}(\xx) = 
\begin{bmatrix} 
(x_1 - x_1^{\min})(x_1 - x_1^{\max}) \\
\vdots \\
(x_n - x_n^{\min})(x_n - x_n^{\max}) 
\end{bmatrix}.
\]
Similarly, we can also define the sets $\Psi$ and $\Xi$. Thus, each element of $\mathcal{F}$ is serialized into a structured textual representation, allowing LLMs to interpret and reason over the system symbolically. In a general form, this can be expressed as:
\begin{equation*}
\begin{aligned}
& \mathtt{-dimension: \, n}, \\
& \mathtt{-domain: \, D\_zones = [[\psi_1^{\min}, \psi_1^{\max}], \dots, [\psi_n^{\min}, \psi_n^{\max}]]}, \\
& \mathtt{-initial\_set: \, I\_zones = [[\theta_1^{\min}, \theta_1^{\max}], \dots, [\theta_n^{\min}, \theta_n^{\max}]]}, \\
& \mathtt{-unsafe\_set: \, U\_zones = [[\xi_1^{\min}, \xi_1^{\max}], \dots, [\xi_n^{\min}, \xi_n^{\max}]]}, \\
& \mathtt{-dynamics: \, dx\_k = f_k(x_1,\dots,x_n), \, k=1,\dots,n}, \\
& \mathtt{-barrier: \, B(x_1,\dots,x_n) = \text{symbolic polynomial expression}}.
\end{aligned}
\end{equation*}

These specifications forms a mathematically faithful interface between formal verification principles and the token-based reasoning capabilities of LLMs. As a result, the model operates over an expressive functional space
\[
\mathcal{F} := \{ B(\xx)\in \mathbb{R}[\xx] \;:\; \text{arbitrary
symbolic form} \},
\]
rather than being restricted to fixed-degree polynomial templates used in classical SOS-based numerical computations.

% \[
% \begin{aligned}
% B(\xx) &\le 0, && \forall \xx \in \Theta, \\
% B(\xx) &> 0, && \forall \xx \in \mathcal{X}_u, \\
% \nabla B(\xx)\cdot \ff(\xx) &\le 0, && \forall \xx.
% \end{aligned}
% \]

To illustrate the structured formal specification for a dynamical system, we present an example dataset entry. Each entry encodes the system dimension, the domain of each state variable, the initial and unsafe sets, the system dynamics, and a candidate BC in a structured, code-like format suitable for LLM input:
\lstset{
  backgroundcolor=\color{gray!10}, % 背景色
  basicstyle=\ttfamily\footnotesize, % 字体
  frame=single, % 框
  breaklines=true, % 自动换行
  columns=fullflexible
}

\begin{lstlisting}
Dataset Entry (Example1):
n = 2
D_zones = [[1.0, 5.0], [1.0, 5.0]]
I_zones = [[4.0, 4.5], [0.9, 1.1]]
U_zones = [[1.0, 2.0], [2.0, 3.0]]
f_expressions = [
    "x1**2 - 5.5*x1",
    "-x0**2 + 6*x0"
]
Barrier_expression = "-3.686*x0**2 - 3.156*x0*x1 + ..."
\end{lstlisting}

Within this representation, the LLM is trained to generate barrier certificates $B(\xx)$ satisfying the conditions in Theorem~\ref{th:BC}. This enables symbolic manipulation over a richer hypothesis space and supports the emergence of formal reasoning capabilities in automated safety verification.

% \subsection{Training Corpus Synthesis via Backward Construction and Diffeomorphic Augmentation}

\subsection{Training Corpus Construction and Augmentation}\label{data_set}

LLM supervised fine-tuning (SFT) requires a structurally diverse and semantically coherent training corpus, yet there are no publicly available resources tailored to this task. We therefore construct a high-quality seed corpus using an inverse-design strategy: instead of discovering an unknown $B(\xx)$ for a fixed system, we begin with a candidate certificate \( B(\xx) \), stochastically synthesized to satisfy certain prior conditions, and from it construct a system that is mathematically guaranteed to admit \( B(\xx) \) as a valid certificate, together with the corresponding state-space sets. 
This procedure allows systematic generation of self-consistent BC instances, which then serve as seed examples for the data augmentation.

\subsubsection{Backward Construction of Barrier Certificate Instances}
We randomly sample candidate certificates $B(\mathbf{x})$ from a controlled high-dimensional function family, specifically multivariate polynomials with random coefficients. By adjusting the degree $N_{\text{deg}}$ and coefficient range $\mathcal{I}_{\text{coef}}$, we control the complexity and geometric diversity of these functions.

For each sampled candidate $B(\xx)$, we construct system dynamics $f(\xx)$ that satisfy the Lie-derivative condition $\mathcal{L}_f B(\xx) \leq 0$. To strictly enforce this while enabling complex dynamics (e.g., rotational motion), we decompose $f(\xx)$ into a dissipative gradient flow and a conservative perturbation:
\[
    f(\xx) = \underbrace{-\nabla B(\xx)}_{\text{dissipative}} + \underbrace{S(\xx)\,\nabla B(\xx)}_{\text{conservative}},
\]
where $S(\xx)$ is a stochastically generated skew-symmetric matrix function. The first term drives the system along the negative gradient to decrease $B(\xx)$, while the second term—being orthogonal to $\nabla B(\xx)$—induces motion along the level sets without changing the value of $B(\xx)$.
Using the identity $v^\top S(\xx)v = 0$ for any vector $v$, the Lie-derivative becomes:
\begin{align*}
    \mathcal{L}_f B(\xx)
    &= \nabla B(\xx)^\top f(\xx) \nonumber \\
    &= -\|\nabla B(\xx)\|^2 + \nabla B(\xx)^\top S(\xx)\,\nabla B(\xx) \nonumber \\
    &= -\|\nabla B(\xx)\|^2 \leq 0.
    \label{eq:lie_derivative}
\end{align*}
This construction guarantees that $B(\xx)$ is a valid BC by design.

To complete the seed instance, the initial set $\Theta$ and unsafe set $\Xi$ are determined based on the topological properties of $B(\xx)$ within the domain $\mathcal{D}$. We use numerical optimization to locate the global minimum $\mathbf{x}^*$ and representative boundary maxima $\mathbf{x}_{\text{bd}}$. In accordance with the definition of BCs, these extremal points serve as anchors for constructing $\Theta$ and $\Xi$, around which bounded regions are defined to yield a well-posed verification problem. The process produces a complete training sample $\mathcal{F}: \langle f, \Theta, \Xi \rangle$ together with its solution \( B(\xx) \), forming the high-fidelity seed dataset.
%%%%%%%%%%%%%%%%%%%%%%%%%%%%%%%%%

\subsubsection{Data Augmentation via Constrained CDT}

Although the backward construction above yields self-consistent problem--solution pairs, the resulting dataset is still quite similar in structure: the construction template is highly regular, and the induced vector fields and sets tend to share a narrow range of geometric patterns. We therefore treat the backward-constructed instances as a compact but high-fidelity seed dataset, and expand them into a larger, more diverse dataset by applying coordinate transformations that preserve BC validity.

Starting from a valid instance
$
    \langle \mathcal{F}, B, \Theta, \Xi \rangle,
$
we apply a coordinate transformation \( \xx = \phi(\mathbf{y}) \) to obtain
$
    \langle \mathcal{F}', B', \Theta', \Xi' \rangle,
$
where \( B'(\mathbf{y}) = B(\phi(\mathbf{y})) \) and
\[
    \mathcal{F}'(\mathbf{y}) = J_{\phi}(\mathbf{y})^{-1}\, \mathcal{F}(\phi(\mathbf{y})), \quad
    J_{\phi}(\mathbf{y}) = \frac{\partial \phi}{\partial \mathbf{y}}.
\]
Requiring \( \phi \) to be a diffeomorphism ensures that
$
    \mathcal{L}_{\mathcal{F}'} B'(\mathbf{y})
    = \mathcal{L}_{\mathcal{F}} B(\xx),
$
so non-positivity of the Lie-derivative and the sign constraints on \( \Theta \) and \( \Xi \) are preserved exactly. 

To ensure that box-shaped regions remain boxes after the transformation, we introduce a constrained coordinate diffeomorphism transformation (CDT).
We restrict the affine map to
\[
    \mathbf{x} = \Lambda \mathbf{y} + \delta, \quad
    \Lambda = \mathrm{diag}(\lambda_{1}, \dots, \lambda_{n}), \; \lambda_{i} \neq 0,
\]
so that the transformation consists only of independent scaling along each coordinate, possible reflections (if some \( \lambda_{i} < 0 \)), and translation. Under this diagonal constraint, any box is mapped to a box, and the transformed bounds are computed in closed form by
\[
    y_i = \lambda_{i}^{-1}(x_i - \delta_i),
\]
applied to the endpoints of each interval.

The constrained CDT prioritizes data validity and a consistent representation over rotational or shear variations: each augmented instance remains semantically consistent with its original barrier certificate example and fits directly into the box format required by our SFT pipeline.

\begin{algorithm}[htbp]
\caption{Backward Construction and Constrained CDT Augmentation}
\label{alg:construction-augmentation}
\begin{algorithmic}[1]
\Require Degree $N_{\text{deg}}$, range $\mathcal{I}_{\text{coef}}$, domain $\mathcal{D}$, CDT bounds $\mathcal{I}_{\text{scale}}, \mathcal{I}_{\text{trans}}$
\Ensure Augmented instance $\mathcal{F}' = \langle f', \Theta', \Xi' \rangle$, certificate $B'$

\State $B(\mathbf{x}), S(\mathbf{x}) \gets \texttt{SamplePoly}(N_{\text{deg}}, \mathcal{I}_{\text{coef}})$
\State $f(\mathbf{x}) \gets -\nabla B(\mathbf{x}) + S(\mathbf{x})\nabla B(\mathbf{x})$
\State $\mathbf{x}^* \gets \arg\min_{\mathbf{x}\in\mathcal{D}} B(\mathbf{x}); \quad \mathbf{x}_{\text{bd}} \gets \texttt{BoundaryMax}(\mathcal{D}, B(\mathbf{x}))$
\State $\Theta \gets \texttt{Region}(\mathbf{x}^*); \quad \Xi \gets \texttt{Region}(\mathbf{x}_{\text{bd}})$
\State $\mathcal{F} \gets \langle f, \Theta, \Xi \rangle$ {\color{gray}{\Comment{Construct self-consistent seed instance}}}

\State $\Lambda \gets \texttt{diag}(\mathcal{U}(\mathcal{I}_{\text{scale}})^n); \quad \delta \gets \mathcal{U}(\mathcal{I}_{\text{trans}})^n$
\State $\phi(\mathbf{y}) \gets \Lambda \mathbf{y} + \delta; \quad J_{\text{inv}} \gets \Lambda^{-1}$
\State $B'(\mathbf{y}) \gets B(\phi(\mathbf{y}))$
\State $f'(\mathbf{y}) \gets J_{\text{inv}} f(\phi(\mathbf{y}))$ {\color{gray}{\Comment{Apply coordinate transformation to dynamics}}}

\For{$K \in \{\Theta, \Xi\}$}
    \State Let $[\mathbf{l}, \mathbf{u}]$ be the bounds of box set $K$
    \State $\mathbf{v}_1 \gets \Lambda^{-1}(\mathbf{l} - \delta); \quad \mathbf{v}_2 \gets \Lambda^{-1}(\mathbf{u} - \delta)$
    \State $K' \gets [\min(\mathbf{v}_1, \mathbf{v}_2), \max(\mathbf{v}_1, \mathbf{v}_2)]$
\EndFor

\State \Return $\langle f', \Theta', \Xi' \rangle, B'$
\end{algorithmic}
\end{algorithm}

The full workflow described above is formalized in Algorithm~\ref{alg:construction-augmentation}. Lines 1--2 initiate the backward construction by sampling $B(\mathbf{x})$ and $S(\mathbf{x})$ to synthesize the vector field $f(\mathbf{x})$ satisfying the stability constraint. Lines 3--4 locate the extrema $\mathbf{x}^*$ and $\mathbf{x}_{\text{bd}}$ to instantiate the consistent seed sets $\Theta$ and $\Xi$. Subsequently, Lines 5--7 parameterize the constrained CDT via $\Lambda$ and $\delta$ to derive the augmented pair $\langle f', B' \rangle$. Finally, Lines 8--11 perform the coordinate update of box boundaries $\mathbf{l}, \mathbf{u}$, employing element-wise operations to yield the valid augmented sets $\Theta', \Xi'$.

\subsection{Identify a Real Barrier Certificate}
This subsection examines the SOS relaxation method~\cite{cav2021} to identify a real BC. 
Given a continuous system $\mathcal{F}: \langle \ff, \Theta, \Psi \rangle$ with unsafe set $\Xi$, the LLM-generated candidate $B(\xx)$ may not satisfy the conditions of Theorem~\ref{th:BC} over the full continuous domain. We therefore focus on determining whether this candidate is indeed valid. This verification reduces to solving a LMI feasibility test problem~\cite{scherer2000linear}.

%As shown in the \emph{verifier} of Fig.~\ref{framework}, 
%we first transfer the problem of checking the validation of the conditions of barrier certifications to an LMI feasibility solving problem based on  SOS relaxation. 

%relaxation technology to encoding a set of constraints that ensure the strict correctness of BC according to Definition 2 and Positivestelensatz theorem. This allows us to transform the verification process into an optimization problem. Since the candidate BC is already known, the optimization problem can be simplified to an LMI feasibility decision, which we can  be solved using the \emph{MOSEK} solver. 

%%%%%%%%%%%%%%%%
% Following Theorem~\ref{th:BC}, to identify $B(\xx)$ is a real barrier certificate, we need to verify the validation for three verification conditions, that is, $B(\xx)$ must satisfy the following conditions
% \begin{eqnarray}\label{problem:barrierexist}
% \left.\begin{array}{l@{}l}
% &B(\xx)\leq 0, \,\,\,\forall \xx\in \Theta, \\
% &\mathcal{L}_f B(\xx)-\lambda(\xx)B(\xx)< 0, \quad\forall \xx\in \Psi,\\
% &B(\xx) > 0, \,\,\,\forall \xx\in \Xi.\\
% \end{array}\right\}
% \end{eqnarray}
% Investigating (\ref{problem:barrierexist}), it turns out that all constraints can be encoded as nonnegativity constraints for polynomials over the associated semi-algebraic sets.
%%%%%%%%%%%%%%%%

% In what follows, we recap how to  transform  
% the verification problem for the polynomial's nonnegativity over 
% a semi-algebraic set,  into an LMI feasibility test problem. 

Let $S \subset \RR^{n}$ be a nonempty basic semi-algebraic set, defined by
\begin{equation}\label{eq:res_k}
S:=\{\xx \in \RR^{n}\,|\,p_1(\xx)\geq 0,\ldots, p_{k}(\xx)\geq 0\},
\end{equation} 
where $p_{i}(\xx) \in \RR[\xx], 1\leq i \leq k$. Let $\pp=(p_1,p_2,\ldots,p_k)$
be a tuple of polynomials in $\RR[\xx]$. The polynomial set defined by
$$M(\pp)=\{\sigma_{0}+\sum_{j=1}^{k}\sigma_{j} \, p_{j} |\, \sigma_{j}\in\Sigma[\xx], 0\leq j \leq k\} $$
is called the quadratic module generated by $\pp$.
The positivity of polynomials over semi-algebraic sets can be investigated via the following Putinar's Positivstellensatz theorem~\cite{put93}.
\begin{theorem}[Putinar’s Positivstellensatz~\cite{put93}]\label{th:positive} Let $S \subset \RR[\xx]$ be as in
(\ref{eq:res_k}). Assume that the quadratic module $M(\pp)$ is
archimedean, namely, there exists $u(\xx) \in M(\pp)$ such that the
 level set $\{\xx \in \RR^{n} | u(\xx) \geq 0\}$ is compact.  If $f(\xx)\in \RR[\xx]$
is strictly positive on $S$, then $f(\xx) \in M(\pp)$, that is, 
$f(\xx)$ can be represented as
\begin{equation}\label{eq:f_positive}
      f(\xx)=\sigma_{0}(\xx)+\sum_{j=1}^{k}\sigma_{j}(\xx)p_{j}(\xx),
\end{equation}
where $\sigma_{j} \in \Sigma[\xx], 0\leq j\leq k$.
\end{theorem}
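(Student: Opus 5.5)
We will follow the functional-analytic route: suppose $f>0$ on $S$ but $f\notin M(\pp)$, separate $f$ from the convex cone $M(\pp)$ by a linear functional, and turn that functional into a measure supported on $S$. Integrating $f$ against this measure then gives a contradiction.

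\emph{Step 1: strong archimedean property.} We first upgrade the hypothesis as stated (some $u\in M(\pp)$ with $\{u\ge 0\}$ compact) to the following property: for every $g\in\RR[\xx]$ there is $N\in\mathbb{N}$ with $N\pm g\in M(\pp)$. The set $H=\{g : \exists N,\ N\pm g\in M(\pp)\}$ is a subring of $\RR[\xx]$, because of the identities
\[
N^2 - g^2=\tfrac12\big((N+g)^2(N-g)+(N-g)^2(N+g)\big)/N
\]
and $4gh=(g+h)^2-(g-h)^2$. Hence it suffices to show $x_i\in H$ for every $i$, which reduces to showing $N-\|\xx\|^2\in M(\pp)$ for some $N$.

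For this we use the compact set $K=\{u\ge 0\}$. Schmüdgen's theorem applied to the single generator $u$ shows that $N-\|\xx\|^2$, which is positive on $K$ for large $N$, lies in the preordering generated by $u$. With one generator that preordering is $\Sigma[\xx]+u\,\Sigma[\xx]$, and this set is contained in $M(\pp)$ because $u\in M(\pp)$ and $M(\pp)$ is closed under multiplication by SOS polynomials. This is the step I expect to be the main obstacle: it needs the nontrivial Schmüdgen (or Jacobi--Prestel) input, and I would cite it rather than reprove it.

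\emph{Step 2: separation.} Because $1$ is an algebraic interior point of $M(\pp)$ by Step 1, the Eidelheit--Kakutani separation theorem applies to the cone $M(\pp)$ and the point $f\notin M(\pp)$. It yields a linear functional $L$ on $\RR[\xx]$ with $L\ge 0$ on $M(\pp)$, $L(1)=1$, and $L(f)\le 0$.

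\emph{Step 3: GNS construction.} Define the semi-inner product $\langle g,h\rangle=L(gh)$ on $\RR[\xx]$, quotient by its null space, and complete to a Hilbert space $\mathcal H$. Multiplication by $x_i$ gives operators $X_i$ on this space. They are bounded, since
\[
\|x_i g\|^2=L(x_i^2g^2)\le N\,L(g^2)
\]
because $g^2(N-x_i^2)\in M(\pp)$ by Step 1. The $X_i$ are also symmetric and pairwise commuting.

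\emph{Step 4: the representing measure.} The spectral theorem for commuting bounded self-adjoint operators produces a probability measure $\mu$ on $\RR^n$ with $L(g)=\int g\,d\mu$ for every polynomial $g$. To see that $\mathrm{supp}\,\mu\subseteq S$, suppose $p_j(\mathbf a)<0$ at some point $\mathbf a$ of the support. Take polynomials $h_m$ approximating the indicator of a small ball around $\mathbf a$ uniformly on the compact support. Then
\[
L(h_m^2p_j)=\int h_m^2p_j\,d\mu<0,
\]
which contradicts $h_m^2p_j\in M(\pp)$.

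\emph{Conclusion.} Since $f>0$ on $S$ and $\mu$ is a probability measure supported on $S$, we get $L(f)=\int_S f\,d\mu>0$. This contradicts $L(f)\le 0$ from Step 2, so $f\in M(\pp)$, which is exactly the representation \eqref{eq:f_positive}.
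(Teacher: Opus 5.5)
The paper does not prove this statement. It quotes Putinar's theorem from \cite{put93} as a black box. The soundness of its LMI test (Theorem~\ref{th:BC_relax}) actually uses only the trivial converse: anything of the form $\sigma_0+\sum_j\sigma_jp_j$ is nonnegative on $S$.

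Your proposal supplies the nontrivial direction by the standard functional-analytic argument, which is essentially Putinar's original route:
\begin{itemize}
\item Schmüdgen's theorem for the single generator $u$, via $\Sigma[\xx]+u\,\Sigma[\xx]\subseteq M(\pp)$ and the subring $H$, upgrades the compact-level-set hypothesis to $N\pm g\in M(\pp)$ for every $g$.
\item Eidelheit--Kakutani separation is applied at the algebraic interior point $1$.
\item A GNS space is built on which multiplication by $x_i$ is bounded, because $g^2(N-x_i^2)\in M(\pp)$.
\item The joint spectral measure gives the representing measure $\mu$.
\end{itemize}
These steps hold as you state them. The identity for $N^2-g^2$ is right, $L(1)>0$ follows from interiority, and the final contradiction goes through. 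What your route buys is a real justification of the completeness claim the paper only asserts: every polynomial strictly positive on $S$ has a certificate of some degree. The cost is heavy imported machinery, namely Schmüdgen's theorem (itself a deep result) and the spectral theorem. The paper avoids this by citing, which suffices for its purposes.

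One step in Step 4 needs a small repair. You cannot approximate the indicator of a ball \emph{uniformly} on $\mathrm{supp}\,\mu$ by polynomials if the boundary sphere meets the support, since uniform limits of continuous functions are continuous. Instead, take a continuous cutoff $\chi$ with:
\begin{itemize}
\item $0\le\chi\le 1$;
\item $\chi=1$ on a smaller ball around $\mathbf a$;
\item $\chi=0$ outside a neighborhood on which $p_j<0$.
\end{itemize}
By Stone--Weierstrass, choose polynomials $h_m\to\chi$ uniformly on the compact set $\mathrm{supp}\,\mu$. Then $L(h_m^2p_j)\to\int\chi^2p_j\,d\mu<0$, because the smaller ball has positive $\mu$-measure when $\mathbf a\in\mathrm{supp}\,\mu$. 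This contradicts $h_m^2p_j\in M(\pp)$ for large $m$. Alternatively, use density of polynomials in $L^2(\mu)$, which does let you approximate the indicator itself, since $p_j$ is bounded on the support.
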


From Theorem~\ref{th:positive}, the existence of the representation
as in (\ref{eq:f_positive}) provides a sufficient and necessary condition for the strict positivity of $f(\xx)$ on $S$. It is known 
that the degree bound for $\sigma_{j}(\xx)$ is exponential with 
$n$ and $\deg(f)$. To illustrate the computational applicability, we turn to find the above representation by fixing a priori bound $2d$ on the degree of the multipliers $\sigma_{j}(\xx)$, namely, $\sigma_{j}\in \Sigma_{d}[\xx]$. 
%According to  Putinar's Positivstellensatz, 
%For instance, let us consider the first constraint in (\ref{problem:barrierexist}).  
%Stated as above, $\Theta$ is a basic semi-algebraic set defined by 
%$$\Theta:&=\{\xx \in \RR^{n}\,|\,\theta_1(\xx)\geq 0,\ldots, \theta_{q}(\xx)\geq 0\}.$$  
Thus, a sufficient condition for $f(\xx)$ 
is nonnegative on the semi-algebraic set $S$ is that 
there exist SOS polynomials $\sigma_{j}(\xx) \in \RR[\xx]$ for $j=0,\ldots, k$, such that $f(\xx)$ can be written as 
\begin{equation}\label{eq:bound_D}
f(\xx)=\sigma_{0}(\xx)+\sum_{j=1}^{k} \sigma_{j}(\xx) p_{j}(\xx),
\,\,\, \text{with}\,\,\, \sigma_{j} \in \Sigma_{d}[\xx], \, 0\leq j \leq k. 
\end{equation}
%with $\deg(\sigma_{j})\leq D$, 
Moreover, given a polynomial $f(\xx)$ is nonnegative on $S$, checking whether $f(\xx)$ has the representation (\ref{eq:bound_D}) reduces to solving a semidefinite program problem.  
%The problem of finding SOS polynomials $\sigma_{j}(\xx)$
%for $j=0,\ldots,k$ can be further converted into the following semi-definite programming (SDP) problem
%\begin{equation}\label{SOSW}
%\left.\begin{array}{l@{}l}
%\displaystyle \inf_{W_{j}} \ & \text{Trace}(W_{j}) \\
%\text{s.\ t.} \ &  \sigma_{j}(\xx)=\mm_{j}(\xx)^T \cdot W_{j} \cdot  \mm_{j}(\xx) \smallskip\\
%& W_{j} \succeq 0, W_{j}^T = W_{j},
%\end{array}\right\}
%\end{equation}
%where $\text{Trace}(W_{j})$ acts as a dummy objective function that is
%commonly used in SDP for optimization problem with no objective
%functions, $\mm_j(\xx)$ is a vector of all monomials in~$\RR[\xx]$ with degree $\leq \frac{1}{2}
%\deg \sigma_{j}(\xx)$.

% Investigating~(\ref{problem:barrierexist}), 
% all conditions  %in~(\ref{problem:barrierexist})
% can be derived as a unified type, that is, 
% polynomial nonnegativity on a semi-algebraic set. 
% From Theorem~\ref{th:positive}, the verification conditions as in~(\ref{problem:barrierexist}) can be 
% relaxed as more tractable ones by the representation as (\ref{eq:bound_D}).
Investigating Theorem~\ref{th:BC}, 
all conditions  %in~(\ref{problem:barrierexist})
can be derived as a unified type, that is, 
polynomial nonnegativity on a semi-algebraic set. 
From Theorem~\ref{th:positive}, the verification conditions as in~Theorem~\ref{th:BC} can be 
relaxed as more tractable ones by the representation as (\ref{eq:bound_D}).

\begin{theorem}\label{th:BC_relax} %[Barrier Certificate]\label{prajna2007safety}
Let $\mathcal{F}: \langle \ff, \Theta, \Psi \rangle $ be a continuous system, and an unsafe state set $\Xi$ be defined as above. 
Let $B(\xx)$ be a given polynomial and $d$ be a positive integer. 
If there exist 
%a polynomial $\lambda(\xx)$, 
sums of squares $\sigma_i(\xx)\in \Sigma_{d}[\xx]$, 
$\phi_{i}(\xx)\in \Sigma_{d}[\xx]$, 
$\delta_{i}(\xx)\in \Sigma_{d}[\xx]$, 
and a polynomial $\lambda(\xx)$ with $\deg(\lambda)\leq 2d$, 
and positive numbers $\epsilon_1, \epsilon_2$, such that
the following conditions hold:
  % a real-valued function $B: \Psi\rightarrow \RR$,
%which satisfies the following conditions: 
%a barrier certificate of $\mathcal{F}$ for safety with respect to the unsafe region $\Xi$ is a real-valued function $B: \Psi\rightarrow\RR$,
\begin{enumerate}
%\item[(i)]
\item 
 $-B(\xx)-\sum_{i=1}^{q}\sigma_{i}(\xx)\theta_{i}(\xx)\in\Sigma_{d}[\xx]$,
%\item[(ii)] 
\item
$-\mathcal{L}_f B(\xx)+\lambda (\xx)B(\xx)-\sum_{i=1}^{r}\phi_i(\xx)\psi_{i}(\xx)-\epsilon_1\in \Sigma_{d}[\xx]$,
% \item[(iii)] 
\item
$B(\xx)-\sum_{i=1}^{p}\delta_{i}(\xx)\xi_{i}(\xx)-\epsilon_2\in\Sigma_{d}[\xx]$,
\end{enumerate}
%where $\mathcal{L}_f B(\xx)$ denotes the {Lie-derivative} of %$B(\xx)$ along the vector field $\ff(\xx)$, i.e., $\mathcal{L}_f %B(\xx)=\sum_{i=1}^n\frac{\partial B}{\partial x_i} \cdot f_{i}%%(\xx)$. In this paper, the third condition is relaxed to %$\mathcal{L}_f B(\xx)-\lambda B(\xx)<0\quad\forall \xx\in \Psi$, %which is convex.
Then $B(\xx)$ is a BC of the system $\mathcal{F}$, and the safety of $\mathcal{F}$ is guaranteed.
\end{theorem}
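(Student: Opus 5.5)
The plan is to reduce Theorem~\ref{th:BC_relax} to Theorem~\ref{th:BC}. I will show that each of the three SOS membership conditions implies the corresponding pointwise condition (i)--(iii) of Theorem~\ref{th:BC}, with the same polynomial $\lambda(\xx)$. Theorem~\ref{th:BC} then delivers that $B$ is a BC and that $\mathcal{F}$ is safe. The only tool needed is that an SOS polynomial is nonnegative everywhere on $\RR^n$, together with the sign information on the defining polynomials of $\Theta$, $\Psi$ and $\Xi$. No appeal to Putinar's Positivstellensatz (Theorem~\ref{th:positive}) is required in this direction; that result only motivates why such certificates should exist.

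\emph{Condition~1.} Write $-B(\xx)-\sum_i\sigma_i(\xx)\theta_i(\xx)=s_0(\xx)$ with $s_0\in\Sigma_d[\xx]$. Then $B(\xx)=-s_0(\xx)-\sum_i\sigma_i(\xx)\theta_i(\xx)$. For $\xx\in\Theta$ every $\theta_i(\xx)\ge 0$, and every $\sigma_i(\xx)\ge 0$ and $s_0(\xx)\ge 0$ because they are sums of squares. Hence $B(\xx)\le 0$ on $\Theta$, which is condition~(i).

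\emph{Condition~2.} Similarly, $\mathcal{L}_fB(\xx)-\lambda(\xx)B(\xx)=-s_1(\xx)-\sum_i\phi_i(\xx)\psi_i(\xx)-\epsilon_1$ with $s_1$ SOS. On $\Psi$ each $\psi_i\ge0$, so the right-hand side is at most $-\epsilon_1<0$. This gives condition~(ii) with the same $\lambda$.

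\emph{Condition~3.} Finally, $B(\xx)=s_2(\xx)+\sum_i\delta_i(\xx)\xi_i(\xx)+\epsilon_2\ge\epsilon_2>0$ on $\Xi$, which is condition~(iii).

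Combining the three, Theorem~\ref{th:BC} applies. The only step needing care is bookkeeping: the summation indices $q,r,p$ in the statement must be read as the numbers of defining polynomials $\iota,\kappa,r$ of $\Theta,\Psi,\Xi$. Strictness in (ii) and (iii) comes from the positive constants $\epsilon_1,\epsilon_2$, which is why they appear in the statement. If anything is a genuine obstacle, it is inherited from Theorem~\ref{th:BC} itself. There one must argue that strict decrease of $B$ on the zero level set inside $\Psi$ prevents trajectories from crossing from $\{B\le0\}$ into $\{B>0\}$ before leaving $\Psi$. Since that result is assumed, nothing further is needed here.
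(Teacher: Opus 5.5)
Your proposal is correct and takes essentially the same route as the paper, whose proof simply asserts that the SOS representations in conditions (1)--(3) imply conditions (i)--(iii) of Theorem~\ref{th:BC} and then invokes that theorem. You make that implication explicit using nonnegativity of SOS polynomials and of the defining polynomials on $\Theta$, $\Psi$, $\Xi$, with $\epsilon_1,\epsilon_2$ supplying strictness, and your remarks that only this trivial direction (not Putinar's theorem) is used and that the summation bounds $q,r,p$ should be read as $\iota,\kappa,r$ are both accurate.
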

%transformed into an equivalent linear matrix inequality feasibility of the form
\begin{proof}
As shown in (\ref{eq:f_positive}), sum-of-squares representation 
indicates that the conditions (1-3) can imply the conditions (i-iii) in Theorem~\ref{th:BC}, respectively. Thus, 
the claim is proved.
\end{proof}
The conditions (1-3) in Theorem~\ref{th:BC_relax}
can be rewritten as an LMI feasibility test problem, namely,    
%the learned neural barrier function is represented in a polynomial form, we use the SOS technique to transform its safety verification problem~(\ref{problem:barrierexist}) into the following optimization problem:
\begin{eqnarray}\label{problem:LMI}
\left.\begin{array}{l@{}l}
&\rm {find} \quad \sigma_{i}(\xx), \delta_{i}(\xx), \phi_{i}(\xx), \lambda(\xx)\\ 
&\text{s.t.}\,\,\,\, -B(\xx)-\sum_{i=1}^{q}\sigma_{i}(\xx)\theta_{i}(\xx)\in\Sigma_{d}[\xx],\\
&\quad\quad -\mathcal{L}_f B(\xx)+\lambda (\xx)B(\xx)-\sum_{i=1}^{r}\phi_i(\xx)\psi_{i}(\xx)-\epsilon_1\in\Sigma_{d}[\xx],\\
&\quad \quad 
B(\xx)-\sum_{i=1}^{p}\delta_{i}(\xx)\xi_{i}(\xx)-\epsilon_2\in\Sigma_{d}[\xx],\\
\end{array}\right\}
\end{eqnarray}
where $\epsilon_1, \epsilon_2 \in \RR_{>0}$ are prespecified small positive numbers, and $\sigma_{i}(\xx) \in \Sigma_{d}[\xx]$, 
$\delta_{i}(\xx)\in \Sigma_{d}[\xx]$, $\phi_{i}(\xx)\in \Sigma_{d}[\xx]$, 
and $\lambda(\xx)$ is a polynomial with $\deg(\lambda)\leq 2d$. 
Many SDP solvers such as {\it MOSEK}~\cite{andersen2000mosek}, {\it SOSTOOLS}~\cite{prajna2004sostools}, and {\it YALMIP}~\cite{lofberg2004yalmip} enable efficient solution of the above SDP problems.

\begin{table*}[ht]
\caption{Performance Evaluation on Benchmark Examples}
\label{tab:comparison_with_penbmi_fullwidth}
\centering
\renewcommand{\arraystretch}{0.97}
\begin{tabular}{rcccccc}
\toprule
\multirow{2}{*}{\textbf{Benchmark}} 
& \multicolumn{2}{c}{\textbf{LLM (Ours)}} 
& \multicolumn{2}{c}{\textbf{PENBMI~\cite{kocvara2005penbmi}}} 
& \multicolumn{2}{c}{\textbf{FOSSIL~2.0~\cite{edwards2024FOSSIL}}} \\
\cmidrule(lr){2-3} \cmidrule(lr){4-5} \cmidrule(lr){6-7}
& \textit{Success Rate (\%)} & \textit{Avg. Runtime (s)} 
& \textit{Success Rate (\%)} & \textit{Avg. Runtime (s)} 
& \textit{Success Rate (\%)} & \textit{Avg. Runtime (s)} \\ 
\midrule
2D Systems & 81.6 & 0.5499 & 54.8 & 1.2714  & 95.2 & 1.2660 \\
3D Systems & 61.6 & 1.0537 & 38.80 & 3.2293 & 73.9 & 4.0685 \\
4D Systems & 50.8 & 1.7072 & 24.80 & 38.3319 & 41.7 & 16.8028 \\
5D Systems & 50.0 & 3.9809 & 27.20 & 55.2020 & 24.3 & 35.0049 \\ 
\midrule
\textbf{Overall}\;\;\; & 61.0 & 1.8229 & 36.4 & 24.5086 & 58.7 & 14.2856 \\ 
\bottomrule
\end{tabular}
\end{table*}

% \begin{table*}[ht]
% 	\caption{Performance Evaluation on Benchmark Examples}
% 	\label{tab:comparison_with_penbmi_fullwidth}
% 	\centering
% 	% 增加行高，1.2表示为默认行高的1.2倍
% 	\renewcommand{\arraystretch}{1.2}
% 	\begin{tabular*}{\textwidth}{@{\extracolsep{\fill}} r cccc} \toprule
% 		& \multicolumn{2}{c}{Ours} & \multicolumn{2}{c}{PENBMI} \\
% 		\cmidrule(lr){2-3} \cmidrule(lr){4-5}
% 		\textit{System Dimension} & \textit{Success Rate (\%)} & \textit{Avg. Runtime (s)} & \textit{Success Rate (\%)} & \textit{Avg. Runtime (s)} \\ \midrule
% 		2D Systems & 81.6 & 0.2499 & 54.8 & 1.2714 \\
% 		3D Systems & 61.6 & 0.6537 & 38.80 & 3.2293 \\
% 		4D Systems & 50.8 & 1.2072 & 24.80 & 38.3319 \\
% 		5D Systems & 50.0 & 3.5809 & 27.20 & 55.2020 \\ \midrule
% 		\textbf{All} & 61.0 & 1.4229 & 36.4 & 24.5086 \\ \bottomrule
% 	\end{tabular*}
% \end{table*}

\section{Experiments}\label{exp}

In this section, we conduct a set of experiments to systematically evaluate the effectiveness and practical viability of our proposed LLM-based generative BC synthesis method. We begin with a case study on a real-world nonlinear system, offering a step-by-step demonstration of the complete workflow. We then provide a quantitative comparison against the BMI solver {\it PENBMI}~\cite{kocvara2005penbmi} and the state-of-the-art neural BC synthesis tool {\it FOSSIL~2.0}~\cite{edwards2024FOSSIL}, assessing both success rate and execution time across systems of different dimensions. All experiments were carried out on an Intel(R) Xeon(R) Gold 6246 CPU @ 3.30GHz equipped with two NVIDIA GeForce RTX 2080 Ti GPUs. Source code and experimental details are available at~\href{https://anonymous.4open.science/r/llm4bc-9782}{https://anonymous.4open.science/r/llm4bc-9782}.
\subsection{Experiment Setup}

{\bf{[Dataset Construction].}} The training dataset is generated using the backward construction technique detailed in the Section~\ref{data_set}, comprising a total of 100000 problem-solution pairs. These synthesized dynamical systems cover a range of dimensions, from 2D to 5D, ensuring both diversity and complexity within the corpus. For evaluation, a separate test set of 1000 cases is generated using the same procedure. Critically, to guarantee a fair evaluation and prevent any data leakage, the generation processes for the training and test sets were seeded with different random states, ensuring the test set is a true hold-out collection.

{\bf{[Model and Fine-tuning].}} We selected {\it Qwen3-0.6B} as the base architecture and performed full-parameter supervised fine-tuning on our constructed dataset of 100000 samples. The key hyperparameters for the fine-tuning process were configured as follows: the learning rate was set to \(1.0 \times 10^{-5}\), and a warmup ratio of $0.1$ is employed. During the inference phase for generating certificates, the decoding temperature is set to $0.2$ and $top$-$p$ is set to 1.0 to produce low-variance outputs.

% {\bf{[Verification Setup and Baseline].}} To ensure a fair performance comparison, we standardized key parameters in the verification process. In the post-verification stage for our method, the degree for all SOS multipliers utilized in the three LMI feasibility problems was uniformly set to $2$, and solved by {\it MOSEK}~\cite{andersen2000mosek}.
% As our baseline, we selected {\it PENBMI}~\cite{kocvara2005penbmi}, a prominent and widely-used solver for BMI problems. The baseline experiments were conducted in a MATLAB R2022b environment, where the barrier Certificate synthesis problems were modeled using the {\it YALMIP}~\cite{lofberg2004yalmip} toolbox and subsequently solved by {\it PENBMI}. For each case in our test set, {\it PENBMI} was tasked with solving its traditional BMI formulation, which involves a simultaneous search for the coefficients of a polynomial certificate $B(\xx)$ and its corresponding SOS multipliers. Correspondingly, the degree of all SOS multipliers in this BMI formulation was also fixed to $2$. A timeout of $3600$ seconds was imposed for each test case. A run was deemed successful if {\it PENBMI} converged to a feasible solution within this time limit.

{\bf{[Verification Setup and Baseline].}} To ensure a fair performance comparison, we standardize key parameters in the verification process. In the post-verification stage for our method, the degree for all SOS multipliers utilized in the three LMI feasibility problems is uniformly set to $2$, and solved by {\it MOSEK}~\cite{andersen2000mosek}.
As our baseline, we select {\it PENBMI}~\cite{kocvara2005penbmi}, a prominent and widely-used solver for BMI problems. For each case in our test set, {\it PENBMI} is tasked with solving its traditional BMI formulation, which involves a simultaneous search for the coefficients of a polynomial certificate $B(\xx)$ and its corresponding SOS multipliers. Correspondingly, the degree of all SOS multipliers in this BMI formulation is also fixed to $2$. A timeout of $3600$ seconds is imposed for each test case. A run is deemed successful if {\it PENBMI} converged to a feasible solution within this time limit.

% Additionally, we also consider neural BC synthesis method {\it FOSSIL~2.0} as a baseline to emphasize the performance differences between LLMs and conventional NNs in BC learning.
Additionally, we consider the neural BC synthesis method {\it FOSSIL~2.0} as a baseline to highlight the performance gap between LLMs and conventional NNs in BC learning. For each test system, we evaluate 10 pre-specified parameter sets and consider the case successful if any yields a valid barrier certificate.

\subsection{Case Study}
% To concretely illustrate the end-to-end workflow of our proposed method, this section details the entire process for a representative three-dimensional non-linear system, from certificate generation by the LLM to its successful formal verification.
% 为了具体说明我们提出的方法的端到端工作流程，本节详细介绍了代表性三维非线性系统的整个过程，从LLM生成证书到成功的形式验证。
%重写refine并给出英文翻译，符合学术论文写作风格”为了进一步说明我们端到端大模型综合生成式BC的实用性，本节用一个来自文献中的真实示例来测试整个过程，从LLM生成证书到成功真实性形式化验证“

% To further demonstrate the utility of our end-to-end LLM-integrated generative BC framework, 
% this section details the entire process for a representative three-dimensional non-linear system, from certificate generation by the LLM to its successful formal verification
To concretely illustrate the end-to-end workflow of our proposed method, this section details the entire process for a real-world 3D example in the literature from certificate generation by the LLM to its successful formal verification.
% \paragraph{\textbf{Step 1: Problem Formulation}}

% \paragraph{\textbf{Step 1: Problem Formulation}}

% We consider a 3D non-linear dynamical system with state variables \( \xx = [x_1, x_2, x_3]^T \). The system (named 'C8') is defined by the following set of ordinary differential equations, which includes a cubic non-linearity:

\emph{Example 1.}~\cite{C9} Consider the following continuous nonlinear  dynamical system in the plant:
\begin{equation}
   \begin{bmatrix}
        {\dot{x_1}}\\
        {\dot{x_2}}\\
        {\dot{x_3}}\\
    \end{bmatrix}
    =
    \begin{bmatrix}
        -x_2\\
        -x_3 \\
        -x_1-2x_2-x_3+x_1^3\\
    \end{bmatrix}.
\end{equation}

The system domain is defined as $\Psi$=$\{\xx$=$(x_1,x_2,x_3)^T\in\mathbb{R}^3|-2\leq x_1,x_2,x_3\leq2\}$. Our goal is to confirm that all trajectories of the dynamical system, starting from the initial set
$
\Theta$=$\{\xx$=$(x_1,x_2,x_3)^T\in \mathbb{R}^3|-0.25\leq x_1\leq 0.75,-0.25\leq x_2\leq 0.75, -0.75\leq x_3 \leq 0.25\}
$, will never enter the unsafe region
$
\Xi$=$\{\xx$=$(x_1,x_2,x_3)^T\in \mathbb{R}^3|1\leq x_1\leq 2,-2\leq x_2\leq -1, -2\leq x_3 \leq -1\}.
$

% We consider a 3D non-linear dynamical system~\cite{C9}:
% \begin{equation*}
% \begin{cases}
%      \begin{aligned}
%          \dot{x}_1 &= -x_2 \\
%          \dot{x}_2 &= -x_3 \\
%          \dot{x}_3 &= -x_1 - 2x_2 - x_3 + x_1^3.
%      \end{aligned}
% \end{cases}
% \end{equation*}
% The safety verification task is defined over the domain \( D = [-2, 2]^3 \). The Initial Set \( I \) is given by the hyperrectangle:
% \[ I = [-0.25, 0.75] \times [-0.25, 0.75] \times [-0.75, 0.25] \]
% The Unsafe Set \( U \) is given by:
% \[ U = [1, 2] \times [-2, -1] \times [-2, -1] \]

% \emph{Example 1.[Academic 3D Model]} Consider the following controlled continuous dynamical system in the plant:
% \begin{equation}
%    \begin{bmatrix}
%         {\dot{x}}\\
%         {\dot{y}}\\
%         {\dot{z}}\\
%     \end{bmatrix}
%     =
%     \begin{bmatrix}
%         z+8y\\
%         -y+z \\
%         -z-x^2+u\\
%     \end{bmatrix}.
% \end{equation}

% The system domain is defined as $\Psi$=$\{\xx$=$(x,y,z)^T\in\mathbb{R}^3|-2.2\leq x,y,z\leq2.2\}$. Our goal is to confirm that all trajectories of the closed-loop system under the control $u$=$k(x,y,z)$, starting from the initial set
% $
% \Theta$=$\{\xx$=$(x,y,z)^T\in \mathbb{R}^3|-0.4\leq x,y,z\leq 0.4\}
% $, will never enter the unsafe region
% $
% \Xi$=$\{\xx$=$(x,y,z)^T\in \mathbb{R}^3|2\leq x,y,z\leq 2.2\}.
% $

% \paragraph{\textbf{Step 2: Certificate Generation via LLM}}
The structured text description of this system is provided as input to our fine-tuned {\it Qwen3-0.6B} model~\cite{bai2023qwen}. In a single forward pass, the model directly generates the following quadratic polynomial as a candidate barrier certificate, denoted as :
\begin{align*}
B(\xx) = & 0.678x_1^2 - 0.504x_1x_2 - 1.116x_1x_3 - 0.939x_1 - 0.209x_2^2\\
+& 2.008x_2x_3- 2.614x_2 + 0.075x_3^2 + 0.311x_3 - 3.320
\end{align*}

% \paragraph{\textbf{Step 3: Post-Verification via SOS}}
The generated symbolic candidate $B(\xx)$ is subsequently passed to our post-verification module. This module transforms the three requisite conditions for a valid BC into three independent SOS feasibility problems. By invoking a convex optimization solver {\it MOSEK}, we confirm that feasible solutions exist for all three SOS problems, which provides a rigorous proof that the function $B(\xx)$ generated by the LLM is a valid BC.

The entire automated process, from inputting the system description to obtaining a formally verified solution, demonstrates the practical capability of our framework in tackling challenging non-linear verification tasks.

\subsection{Performance Evaluation}

% This section provides a quantitative analysis of the performance of our proposed framework for BC generation using a LLM. 

% This section provides a quantitative analysis in terms of effectiveness (\textit{success rate}) and efficiency (\textit{runtime}) of the performance of our proposed LLM-based framework for BC generation, comparing it against the BMI-based numerical solver {\it PENBMI} and the SMT-based neural BC synthesis tool {\it FOSSIL~2.0}. The experimental results are presented in Table~\ref{tab:comparison_with_penbmi_fullwidth}. 

This section presents a quantitative evaluation of our LLM-based framework for BC generation, assessing both effectiveness (\textit{success rate}) and efficiency (\textit{average runtime}). We compare our method against the BMI-based numerical solver {\it PENBMI} and the SMT-guided neural BC synthesis tool {\it FOSSIL~2.0}. The experimental results are presented in Table~\ref{tab:comparison_with_penbmi_fullwidth}.

% A direct comparison with the traditional PENBMI solver indicates that our approach shows notable advantages in terms of both effectiveness (\textit{success rate}) and efficiency (\textit{runtime}).

% Regarding effectiveness, our method demonstrates a higher success rate in finding valid solutions compared to the baseline. Overall, our model achieved a total success rate of 61.0\% across the 1000 test cases, whereas {\it PENBMI}'s success rate was 36.4\%. This performance gap is generally maintained across systems of varying dimensions. For instance, on 2D systems, our method's success rate was 81.6\%, compared to {\it PENBMI}'s 54.8\%. As the system dimension increases, the success rates of both methods naturally decline, yet the performance degradation of our approach is less pronounced. On the 5D systems, our method maintains a 50.0\% success rate, while {\it PENBMI}'s performance drops to 27.2\%. This suggests that our generative paradigm may possess greater robustness when tasked with exploring and discovering valid solutions for complex, high-dimensional problems.

Regarding effectiveness, our model achieved a total success rate of 61.0\% across the 1000 test cases, whereas {\it PENBMI}'s success rate was 36.4\%. This performance gap is generally maintained across systems of varying dimensions. For instance, on 2D systems, our method's success rate was 81.6\%, compared to {\it PENBMI}'s 54.8\%. 
% Notably, the SMT-based neural BC synthesis tool {\it FOSSIL~2.0} outperforms our approach on low-dimensional examples, benefiting from the stronger capabilities of SMT-guided post-processing in simpler settings. 
Notably, the SMT-based neural BC synthesis tool {\it FOSSIL~2.0} exhibits excellent performance on low-dimensional systems, achieving a 95.2\% success rate on 2D examples. This superiority is consistent with its SMT-guided post-verification mechanism, which is particularly effective when the search space is small and symbolic reasoning remains tractable. However, as system dimensionality increases, the success rate of {\it FOSSIL~2.0} decreases sharply—from 73.9\% in 3D to 41.7\% in 4D, and further down to 24.3\% in 5D. This degradation reflects the inherent difficulty of SMT-based refinement in high-dimensional continuous domains. 
% In contrast, our generative approach shows a more gradual decline, maintaining a 50.0\% success rate in 5D.
% However, as the system dimension increases, the success rate of the neural BC method declines sharply below that of our approach, while its runtime exceeds ours by more than an order of magnitude. 
In contrast, on 5D systems, our method maintains a 50.0\% success rate, whereas {\it PENBMI}'s drops to 27.2\%, and {\it FOSSIL~2.0} similarly suffers a significant performance loss. These results indicate that our generative paradigm exhibits greater robustness for exploring and discovering valid solutions in complex, high-dimensional problems.

% In terms of efficiency, the two methods exhibit a clear difference in scalability. The overall average runtime for our method is 1.42 seconds per case, compared to 24.51 seconds for {\it PENBMI}. More critically, the methods show different trends in computational cost as the system dimension grows. The runtime of our approach shows a moderate increase, from 0.25 seconds for 2D systems to 3.58 seconds for 5D systems. In contrast, {\it PENBMI}'s runtime increases more rapidly, which is particularly evident in the transition from 3D to 4D systems, where its average runtime increases from 3.23 seconds to 38.33 seconds. For 4D systems, the runtime of our method (1.21 seconds) is substantially lower than that of {\it PENBMI}.

In terms of efficiency, the two methods exhibit a clear difference in scalability. The overall average runtime for our method is 1.82 seconds per case, compared to 24.51 seconds for {\it PENBMI}. More critically, the methods show different trends in computational cost as the system dimension grows. The runtime of our approach shows a moderate increase, from 0.54 seconds for 2D systems to 3.98 seconds for 5D systems. In contrast, {\it PENBMI}'s runtime increases more rapidly, particularly in the transition from 3D to 4D systems, where its average runtime jumps from 3.23 seconds to 38.33 seconds. For 4D systems, the runtime of our method (1.70 seconds) is substantially lower than that of {\it PENBMI}.
% Moreover, it can be observed that both neural network-based and LLM-based BC synthesis methods achieve significantly higher efficiency than traditional numerical solvers. 
The SMT-based neural BC tool {\it FOSSIL~2.0} is also more efficient than {\it PENBMI}, achieving an average runtime of 1.27 seconds in 2D and 4.07 seconds in 3D. 
% However, its computational cost rises steeply as dimensionality increases: runtime grows to high dimensions exceeding our method by more than an order of magnitude. 
However, its computational cost escalates sharply with dimensionality, eventually exceeding ours by more than an order of magnitude in 4D and 5D ones.
% This pattern reflects the increasing burden of SMT-based post-processing in high-dimensional continuous domains.
% Overall, both neural network–based and LLM-based BC synthesis methods offer substantial efficiency advantages over traditional BMI solvers.
% This pattern reflects the scalability of the neural BC approach remains limited, while our LLM-driven generative framework maintains consistently low computational cost across dimensions, demonstrating superior efficiency and scalability for higher-dimensional systems
It reflects the scalability of the neural BC approach remains limited, while our LLM-driven generative framework is more efficient and scalable for higher-dimensional systems.

% However, the neural BC approach exhibits lower scalability compared to our LLM-based method, with runtime costs increasing sharply as system dimension rises.

% This observed difference in efficiency and scalability can be attributed to the fundamental theoretical divergence between the two methodologies. As a traditional solver, {\it PENBMI} is tasked with directly solving an BMI problem, for which computational tractability is not theoretically guaranteed. Our Generate-and-Verify framework, however, leverages the LLM to front-load the most difficult step: determining the certificate's symbolic structure. This architectural choice transforms the subsequent verification task from a non-convex BMI problem into a convex LMI feasibility problem, which is solvable in polynomial time. Therefore, the efficiency gains observed in our experiments are likely a manifestation of the theoretical advantages afforded by circumventing the computational bottleneck of conventional BMI solvers. In summary, the results suggest that our method is a promising alternative to traditional approaches, with notable potential in performance and scalability.
\section{Conclusion}

This paper introduced a generative framework for BC synthesis that replaces traditional numerical optimization with symbolic generation by LLM. By producing high-quality BC candidates directly, the proposed method transforms nonconvex BMI formulations into efficient LMI feasibility checks, greatly reducing computational cost while preserving correctness. Unlike system-specific neural BC approaches, our fine-tuned LLM demonstrates strong cross-system generalization, generating diverse and valid certificates for heterogeneous nonlinear systems without retraining. Experiments show substantial gains in both efficiency and success rate compared with classical SOS-based solver and learning-based neural method. 
% Overall, this work highlights the potential of generative AI to advance formal safety verification, offering a scalable and effective pathway for constructing safety guarantees in complex dynamical systems.

In future work, we plan to develop dedicated benchmarks tailored for training LLMs in formal verification tasks, and to explore advanced data augmentation and prompt engineering strategies to further enhance the capabilities of LLMs in verifying more complex dynamical systems.

\bibliographystyle{ACM-Reference-Format}
\bibliography{main}

\end{document}